\def\Pp{\mathcal{P}}
\def\Qp{\mathcal{Q}}
\def\Qq{\mathcal{Q}}
\def\Cp{\mathcal{C}}
\theoremstyle{plain}
\newtheorem{theorem}{Theorem}
\newtheorem{question}[theorem]{Question}
\newtheorem{claim}[theorem]{Claim}
\newtheorem{corollary}[theorem]{Corollary}
\newtheorem{definition}[theorem]{Definition}
\newtheorem{lemma}[theorem]{Lemma}
\newtheorem{proposition}[theorem]{Proposition}
\numberwithin{equation}{section}
\newcommand{\field}[1]{\mathbb{#1}}
\newcommand{\C}{\field{C}}
\newcommand{\N}{\field{N}}
\newcommand{\Q}{\field{Q}}
\newcommand{\Z}{\field{Z}}
\newcommand{\z}{\field{Z}}
\newcommand{\superscript}[1]{\ensuremath{^{\textrm{#1}}}}
\def\wu{\superscript{*}}
\def\wg{\superscript{\dag}}
\begin{document}

\title{Non-normal very ample polytopes -- constructions and examples}

\author[Micha\l\ Laso\'{n}]{Micha\l\ Laso\'{n}\wu\footnote{\wu michalason@gmail.com; \'{E}cole Polytechnique F\'{e}d\'{e}rale de Lausanne, Chair of Combinatorial Geometry, EPFL-SB-MATHGEOM/DCG, Station 8, CH-1015 Lausanne, Switzerland and Institute of Mathematics of the Polish Academy of Sciences, ul.\'{S}niadeckich 8, 00-956 Warszawa, Poland}}

\author[Mateusz Micha\l ek]{Mateusz Micha\l ek\wg\footnote{\wg wajcha2@poczta.onet.pl; Institute of Mathematics of the Polish Academy of Sciences, ul.\'{S}niadeckich 8, 00-956 Warszawa, Poland}}

\thanks{Research supported by Polish National Science Centre grant no. 2012/05/D/ST1/01063 and by Swiss National Science Foundation Grants 200020-144531 and 200021-137574.}
\keywords{Normal polytope, Very ample polytope, Graph polytope, Hilbert basis, Gap vector, Segmental fibration.}

\begin{abstract}
We present a method of constructing non-normal very ample polytopes as a segmental fibration of unimodular graph polytopes. In many cases we explicitly compute their invariants -- Hilbert function, Ehrhart polynomial, gap vector. In particular, we answer several questions posed by Beck, Cox, Delgado, Gubeladze, Haase, Hibi, Higashitani and Maclagan in \cite[Question 3.5 (1),(2), Question 3.6]{CoHaHi12}, \cite[Conjecture 3.5(a),(b)]{BeDeGuMi13}, \cite[Open question 3 (a),(b) p. 2310, Question p. 2316]{HaHiMa07}.
\end{abstract}

\maketitle

\section{Introduction}

The main object of our study are convex lattice polytopes. These combinatorial objects appear in many contexts including: toric geometry, algebraic combinatorics, integer programming, enumerative geometry and many others \cite{BrGu04, CoLiSc11, Fu93, KaMi14, LaMi14, St96}. Thus, it is not surprising that there is a whole hierarchy of their properties and invariants. Relations among them are of great interest.

One of the most intriguing and well-studied property of a lattice polytope is normality, or a related integral decomposition property. A polytope $\Pp$ is \emph{normal} in a lattice $M$ if for any $k\in\N$ every lattice point in $k\Pp$ is a sum of $k$ lattice points from $\Pp$.

Other, crucial property of a polytope, is very ampleness. A polytope $\Pp$ is \emph{very ample} in a lattice $M$ if for any sufficiently large $k\in\Z$ every lattice point in $k\Pp$ is a sum of $k$ lattice points from $\Pp$. This is equivalent to the fact  that for any vertex $v\in \Pp$ the monoid of lattice points in the real cone generated by $\Pp-v$ is generated by lattice points of $\Pp-v$. Obviously a normal polytope is very ample. The first example of a non-normal, very ample polytope was presented in \cite{BrGu02}. It is $5$-dimensional and corresponds to a triangulation of a real projective space.

It is worth to mention that a normal polytope defines a projectively normal toric embedding of the corresponding projective toric variety. Moreover, every projectively normal, equivariantly embedded toric variety is obtained in this way. However, not every normal projective toric variety has to be projectively normal. These defining normal projective toric varieties are exactly very ample polytopes.

Let us denote by $\Cp\subset\Z\times M$ the semigroup of lattice points in the real cone over $\{1\}\times\Pp$. Let $\Cp_j$ be the number of points $v\in\Cp$ with the zero coordinate $v_0=j$. The function $Ehr_\Pp:j\rightarrow|\Cp_j|$  is known as the \emph{Ehrhart polynomial}, and indeed it is a polynomial \cite{Eh62}. The function assigning to every $j$ the number of points $v$ in the semigroup generated by $\{1\}\times\Pp$ with $v_0=j$ is known as the \emph{Hilbert function} $H_\Pp$. For $j$ large enough it coincides with a polynomial, known as the \emph{Hilbert polynomial}. Clearly, the polytope $\Pp$ is normal if and only if the Ehrhart polynomial equals to the Hilbert function, that is if the cone $\Cp$ is generated by $\{1\}\times\Pp$. For a very ample polytope $\Pp$ the difference between the cone $\Cp$ and the semigroup generated by $\{1\}\times\Pp$ is a finite set (cf. \cite{BeDeGuMi13}), often referred to as the set of holes \cite{Hi12}. Vector which enumerates the number of holes $\gamma(\Pp)_j:=Ehr_\Pp(j)-H_\Pp(j)$ is called the gap vector.

Some properties of non-normal, very ample polytopes were already studied, see \cite{BeDeGuMi13,BrGu02, BrGu04,Hi12} and references therein. Moreover, many approaches to find new families of examples were presented in \cite{Br13}.

In this article we study relations among the above invariants (Ehrhart polynomial, Hilbert function, gap vector, and others) for non-normal, very ample polytopes. Strictly speaking, in Section \ref{Section2} we provide a new construction of very ample polytopes which are often non-normal. Our technique is based on lattice segmental fibrations (cf. \cite{BeDeGuMi13}) of unimodular polytopes. Recall that a polytope is called \emph{unimodular} if all its triangulations are unimodular, that is each simplex has the normalized volume equal to $1$. In Theorem \ref{TheoremVeryAmple} we prove that such a construction always yields a very ample polytope. In Section \ref{Section3}, in order to get examples with interesting properties, we specialize to a simple, natural class of unimodular polytopes -- edge polytopes corresponding to even cycles of length $2k$ and to the clique on $4$ vertices. For their segmental fibrations $\Pp_{k,a}$ (where parameter $a$ specifies the fibration of edge polytope of $C_{2k}$) and $\Qp_{a,b}$ (where parameters $a,b$ specify the fibration of edge polytope of $K_4$) we compute explicitly the Hilbert basis and the gap vector. Using these examples, in the last Section \ref{Section4}, we answer the following questions and conjectures.

\begin{question}\label{QuestionMain}
$ $
\begin{enumerate}
\item \cite[Question 3.5 (2)]{CoHaHi12} Is it true that the second dilatation $2\Pp$ of a very ample polytope $\Pp$ is always normal? \newline
Equivalently, is the second Veronese reembeding of an equivariantly embedded normal projective toric variety projectively normal?
\item \cite[Conjecture 3.5(a)]{BeDeGuMi13} The gap vector of a very ample polytope does not contain any internal zeros. \newline
Equivalently, suppose that $L$ is an ample line bundle on a normal toric variety $X$. If $H^0(X,L)^{\otimes n}$ surjects onto $H^0(X,L^{\otimes n})$ for some $n>1$, then the same is true for any $m\geq n$.
\item \cite[Conjecture 3.5(b)]{BeDeGuMi13} The gap vector of a very ample polytope $\Pp$ with normal facets in unimodal (that is $\gamma(\Pp)_1\leq\dots\leq\gamma(\Pp)_i\geq\gamma(\Pp)_{i-1}\geq\dots$ for some $i$).
\item \cite[Open problem 3 (a) p. 2310, Question p. 2316]{HaHiMa07}, \cite[Question 3.6]{CoHaHi12} Is it true that if $n\Pp$ and $m\Pp$ are normal, then so is $(n+m)\Pp$?  \newline
Equivalently, suppose that $L$ is an ample line bundle on a normal toric variety. Is it true that if $L^{\otimes n}$ and $L^{\otimes m}$ define projectively normal embeddings, then so deos $L^{\otimes (n+m)}$?
\item \cite[Open problem 3 (b) p. 2310]{HaHiMa07} Suppose that $\Pp$ and $\Qp$ are normal polytopes and the normal fan of $\Qp$ refines the normal fan of $\Pp$. Is $\Pp+\Qp$ normal? \newline
Equivalently, let $R$ be the region in the ample cone of a projective normal toric variety consisting of projectively normal line bundles. Is $R$ a module over the nef cone?
\item \cite[Question 3.5 (1)]{CoHaHi12} Does there exist a polytope $\Pp$ such that $\mu_{midp}(\Pp)<\mu_{Hilb}(\Pp)<\mu_{idp}(\Pp)$? (see Section \ref{Section2} for relevant definitions)
\end{enumerate}
\end{question}

During our research we used a lot  computer algebra systems \cite{BrLcRoSo,DeGrPfSc12,GaJo00}.

\section*{Acknowledgements}

We would like to thank Professor Winfried Bruns for help with computations.

\section{Constructions}\label{Section2}

\subsection{Segmental fibrations of unimodular polytopes}

Let us consider a slight modification of the definition \cite[Definition 2.2]{BeDeGuMi13} of a lattice segmental fibration.

\begin{definition}\label{DefinitionFibration}
A projection $f:\Q\times\Q^d\rightarrow\Q^d$ restricted to a lattice polytope $\Pp\subset\Q\times\Q^d$ is a \emph{lattice segmental fibration} if the preimage $f^{-1}(x)$ of every point $x\in f(\Pp)\cap\Z^{d}$ is a lattice segment of positive length.
\end{definition}

\begin{theorem}\label{TheoremVeryAmple}
If a polytope $\Pp$ admits a lattice segmental fibration $f$ to a unimodular polytope $\Qp:=f(\Pp)$, then it is very ample.
\end{theorem}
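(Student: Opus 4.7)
The plan is to verify the very-ampleness criterion at an arbitrary vertex $v = (v_0, v')$ of $\Pp$, namely that every lattice point of $\text{cone}(\Pp - v)$ is a $\Z_{\geq 0}$-combination of lattice points of $\Pp - v$. Because $v$ is a vertex of $\Pp$ lying on the fiber segment $f^{-1}(v') \cap \Pp$, it must be one of the two endpoints of this (positive-length) segment, and after possibly reflecting the fiber coordinate I may assume $v_0 = m(v')$, where $m \colon \Qp \to \R$ is the piecewise linear convex function whose graph is the lower boundary of $\Pp$. This immediately provides a ``vertical'' generator $e_0 := (1, 0) = (m(v') + 1, v') - v \in (\Pp - v) \cap \Z^{d+1}$.

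Next I would choose a unimodular triangulation $T$ of $\Qp$ that (a) has $v'$ as a vertex and (b) refines the polyhedral subdivision of $\Qp$ by the domains of linearity of $m$. This subdivision has lattice polytopes as cells (they are the projections of the lower facets of $\Pp$), and $v'$ is already a vertex of it, being the image of the vertex $v$ of $\Pp$ from a lower facet. Any lattice refinement into simplices is automatically unimodular because $\Qp$ is. Consequently, on every maximal simplex $\sigma \in T$ containing $v'$ the function $m$ is affine, $m(y) = m(v') + \ell_\sigma(y - v')$ for some linear form $\ell_\sigma$.

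Now take $p = (p_0, p') \in \text{cone}(\Pp - v) \cap \Z^{d+1}$. The tangent cone $\text{cone}(\Qp - v') = \bigcup_{\sigma \ni v'} \text{cone}(\sigma - v')$ is a union of unimodular simplicial cones, so I can pick $\sigma \ni v'$ with $p' \in \text{cone}(\sigma - v')$ and expand $p' = \sum_u n_u(u - v')$ with $n_u \in \Z_{\geq 0}$, $u$ running over the vertices of $\sigma$ distinct from $v'$. Lifting to the lower fiber endpoints $\tilde u := (m(u), u) \in \Pp \cap \Z^{d+1}$ yields
\[
\sum_u n_u(\tilde u - v) = \bigl(\ell_\sigma(p'),\, p'\bigr),
\]
so $p$ will be recovered once we can add $p_0 - \ell_\sigma(p')$ copies of $e_0$.

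The decisive remaining step is the inequality $p_0 \geq \ell_\sigma(p')$. It follows from convexity of $m$: since $m$ agrees with the affine function $y \mapsto m(v') + \ell_\sigma(y - v')$ on the full-dimensional cell $\sigma$, that same function is a \emph{global} lower bound for $m$ on $\Qp$, so the linear form $(y_0, y) \mapsto y_0 - v_0 - \ell_\sigma(y - v')$ is nonnegative on $\Pp - v$ and hence on the whole cone. The hardest part to get right, I expect, is the simultaneous existence of the triangulation $T$ with the three listed properties; each one is classical in isolation, but a careful combinatorial construction (e.g.\ a regular lattice refinement compatible with the star of $v'$) must be spelled out.
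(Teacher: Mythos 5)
Your argument is correct and follows essentially the same route as the paper's proof: work at the lower endpoint of the fiber over a vertex, take a unimodular triangulation of $\Qp$ refining the domains of linearity of the lower envelope ($h_l$ in the paper, your $m$), and generate the tangent cone of $\Pp$ by the lifted lattice points $(h_l(u),u)$ of the simplices in the star of $v'$ together with the vertical vector $(1,0,\dots,0)$. Your explicit decomposition and the convexity argument for $p_0\geq\ell_\sigma(p')$ just spell out the paper's assertion that the smooth cones $C_t$ subdivide $C$; also note that the triangulation you worry about at the end comes for free, since any lattice triangulation refining the subdivision is unimodular by hypothesis and hence automatically has every lattice point, in particular $v'$, as a vertex.
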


\begin{proof}
Let $\pi:\Q\times\Q^d\rightarrow \Q$ be the projection to the first factor, so that $\pi\times f$ is an identity. Let us define two functions:
$$h_l:\Qp\ni q\rightarrow\min_{x\in\Pp}\{\pi(x):f(x)=q\}\in\Q,\text{ }h_u:\Qp\ni q\rightarrow\max_{x\in\Pp}\{\pi(x):f(x)=q\}\in\Q.$$
Fix a vertex $v$ of the polytope $\Pp$. We may assume that $h_l(v)=\pi(v)$, as the case $h_u(v)=\pi(v)$ is analogous. To simplify the notation, we assume $v=0\in \z^{d+1}$. Let $C$ be the real cone pointed at $v$ and spanned by edges of $\Pp$ adjacent to $v$.
The domains of linearity of $h_l$ provide a partition of $\Qq$ into convex lattice polytopes. As $\Qq$ is unimodular we may extend this partition to a unimodular triangulation $T$. Consider the set $T'$ of those simplices in $T$ that contain $f(v)$. By forgetting those facets of simplices in $T'$ that do not contain $f(v)$, we may regard $T'$ as a unimodular subdivision of the projection by $f$ of the cone $C$. For each $t\in T'$ consider the cone $C_t$ spanned by the vectors $(h_l(l),l)\in\Z^{d+1}$ for $l\in t\cap\Z^d$, $l\neq v$ and the vector $(1,0,\dots,0)$. The cones $C_t$ are smooth, as each $t\in T'$ is unimodular and form a subdivision of $C$. In particular, $C$ as a semigroup is generated by the ray generators of the cones $C_t$ and these belong to $\Pp$.
\end{proof}

\subsection{Products of polytopes and their invariants}

An important invariant of a convex lattice polytope $\Pp$ is its Hilbert basis. It is the minimum set of generators, as a semigroup, of the cone $\Cp$. The Hilbert basis is always finite. A polytope is normal if and only if the Hilbert basis equals to $\{1\}\times\Pp$. Many invariants of lattice polytopes are connected with Hilbert basis. Let us introduce three of them, defined originally in \cite{BeDeGuMi13}, which we will need later.

\begin{definition}
For a convex lattice polytope $\Pp$:
\begin{enumerate}
\item $\mu_{Hilb}(\Pp)$ is the highest degree (zeroth coordinate of a point) an element of the Hilbert basis has,
\item $\mu_{midp}(\Pp)$ is the smallest positive integer, such that $\mu_{midp}\Pp$ is normal,
\item $\mu_{idp}(\Pp)$ is the smallest integer, such that for any $n\geq\mu_{idp}$ the polytope $n\Pp$ is normal.
\end{enumerate}
\end{definition}

\begin{lemma}\label{LemmaProduct}
For convex lattice polytopes $\Pp$ and $\Qp$ we have:
\begin{enumerate}
\item $\max\{\mu_{Hilb}(\Pp),\mu_{Hilb}(\Qp)\}\leq\mu_{Hilb}(\Pp\times \Qp)\leq f(\mu_{Hilb}(\Pp),\mu_{Hilb}(\Qp))$, for some function $f$ (with $f(2,3)\leq 6$),
\item $\mu_{midp}(\Pp\times \Qq)=\min\{n:n\Pp\text{ and }n\Qq\text{ are normal}\}$,
\item $\mu_{idp}(\Pp\times \Qq)=\max\{\mu_{idp}(\Pp),\mu_{idp}(\Qq)\}$.
\end{enumerate}
\end{lemma}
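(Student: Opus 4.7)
The plan is to reduce parts (2) and (3) to a clean normality criterion for products, to handle the lower bound in (1) by a short projection argument, and to isolate the upper bound in (1) as the technical core.

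The starting point for (2) and (3) is two elementary observations. First, dilation commutes with products: $n(\Pp\times\Qq)=n\Pp\times n\Qq$ as lattice polytopes. Second, a product $\Pp\times\Qq$ is normal if and only if both factors are normal; the ``if'' direction decomposes a lattice point of $k(\Pp\times\Qq)$ coordinatewise using normality of $\Pp$ and $\Qq$ separately, while for ``only if'' one takes $p\in k\Pp\cap\Z^{d_1}$, pairs it with $(p,kv)$ for any lattice vertex $v$ of $\Qq$, applies normality of $\Pp\times\Qq$, and projects to the first factor. Part (2) then follows since $\mu_{midp}(\Pp\times\Qq)=\min\{n:n\Pp\times n\Qq\ \text{is normal}\}=\min\{n:n\Pp\ \text{and}\ n\Qq\ \text{are normal}\}$, and part (3) is the same identity applied uniformly for all $m\geq n$, yielding $\mu_{idp}(\Pp\times\Qq)=\max\{\mu_{idp}(\Pp),\mu_{idp}(\Qq)\}$.

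For the lower bound in (1), take a Hilbert basis element $(j,p)$ of $\Cp(\Pp)$ with $j=\mu_{Hilb}(\Pp)$ and any lattice vertex $v$ of $\Qq$. Then $(j,p,jv)\in\Cp(\Pp\times\Qq)$, and any proper decomposition of this point in $\Cp(\Pp\times\Qq)$ would project to a proper decomposition of $(j,p)$ in $\Cp(\Pp)$, forcing a summand of degree at least $j$ there; hence $\mu_{Hilb}(\Pp\times\Qq)\geq\mu_{Hilb}(\Pp)$, and symmetrically for $\Qq$.

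The upper bound is the main obstacle. The natural strategy is: given $(j,p,q)\in\Cp(\Pp\times\Qq)$, pick Hilbert-basis decompositions $(j,p)=\sum_i(a_i,p_i)$ with $a_i\leq\mu_P$ and $(j,q)=\sum_k(b_k,q_k)$ with $b_k\leq\mu_Q$; a coincidence of interior partial sums $0<s_i=t_k<j$ with $s_i=a_1+\cdots+a_i$ and $t_k=b_1+\cdots+b_k$ then yields a nontrivial decomposition of $(j,p,q)$ in $\Cp(\Pp\times\Qq)$ by regrouping the respective summands at that common split. One must then show that for $j$ exceeding some function $f(\mu_P,\mu_Q)$, decompositions can always be chosen so that the two families of partial sums share an interior value. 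I expect this to be the hardest step, because a pure gap-size bound on the two sequences does not force overlap -- the interleaved progressions $\{0,2,4,\dots,j\}$ and $\{0,1,3,\dots,j-1,j\}$ both have gaps at most $2$ but disjoint interiors -- so the argument must genuinely use the structure of Hilbert bases, in particular the freedom to substitute degree-one Hilbert basis elements (ordinary lattice points of $\Pp$ and $\Qq$) into the decompositions. The specific bound $f(2,3)\leq 6$ can then be obtained either from such a general combinatorial argument or, as a fallback, by a finite enumeration of the remaining cases.
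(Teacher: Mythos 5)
Your treatment of parts (2) and (3) and of the lower bound in (1) is correct and matches the paper: both reduce to the observation that $n(\Pp\times\Qq)=n\Pp\times n\Qq$ and that a product is normal iff both factors are, and the lower bound follows because a Hilbert basis element $(j,p)$ of the cone over $\Pp$ lifts to an indecomposable element of the cone over $\Pp\times\Qq$ (the paper lifts by all of $j\Qq$, you lift by a single vertex; either works). The problem is the upper bound in (1), which you correctly identify as the core of the lemma and then explicitly leave unproved: ``one must then show that for $j$ exceeding some function $f(\mu_P,\mu_Q)$, decompositions can always be chosen so that the two families of partial sums share an interior value'' is precisely the statement that needs an argument, and your proposal supplies neither the argument nor the bound $f(2,3)\leq 6$. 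As written, the proposal does not establish that $f$ exists at all.

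The missing idea in the paper is a reduction to Graver bases. Given $(d,v,w)$ in the cone over $\{1\}\times\Pp\times\Qq$, decompose $(d,v)$ and $(d,w)$ into Hilbert basis elements of the respective cones; recording only the degrees gives an identity $\sum_i a_i=\sum_k b_k=d$ with $a_i\in A\subset\{1,\dots,\mu_{Hilb}(\Pp)\}$ and $b_k\in B\subset\{1,\dots,\mu_{Hilb}(\Qq)\}$, i.e.\ a binomial $m_1-m_2$ in the toric ideal of the graded point configuration $A\cup B\subset\Z$. Any primitive binomial $n_1-n_2$ with $n_i\mid m_i$ picks out a sub-multiset of the $a_i$'s and a sub-multiset of the $b_k$'s with equal sums, hence a splitting of $(d,v,w)$ into two elements of the cone over $\{1\}\times\Pp\times\Qq$ of positive degrees. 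Since the Graver basis of this ideal is finite with degrees depending only on $A$ and $B$, every $(d,v,w)$ with $d$ above the maximal Graver degree decomposes, which is exactly the existence of $f$; the value $f(2,3)\leq 6$ then comes from inspecting the primitive relations on degrees $\{2,3\}$ (cf.\ primitive partition identities in Sturmfels). Note also that your cautionary example about interleaved progressions with disjoint interiors is an artifact of working with prefix sums of a \emph{fixed ordering}: the relevant question is whether some sub-multisets of $\{a_i\}$ and $\{b_k\}$ have a common sum, and for your example ($a_i$ all equal to $2$, $b_k$ a string of $2$'s with two $1$'s) they obviously do. Once this is recognized, the difficulty you anticipate largely dissolves into the finiteness of the Graver basis.
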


\begin{proof}
The last two statements follow from the fact that the product of polytopes is normal if and only if each polytope is. For the first statement, first note that if $v$ is an element in the Hilbert basis of $\Pp$ of degree $k$, then $v\times k\Qq$ is contained in the Hilbert basis of $\Pp\times \Qq$. The function $f$ can be defined in the following way. Consider a subset $A\subset \{1,\dots,\mu_{Hilb}(\Pp)\}$ of degrees appearing in the Hilbert basis of $\Pp$, and $B\subset \{1,\dots,\mu_{Hilb}(\Qq)\}$ for $\Qp$ analogously. Then $\mu_{Hilb}(\Pp\times \Qq)$ is at most the maximum degree of the Graver basis (that is the set of primitive binomials) of the toric ideal $I\subset\C[x_i,y_j:i\in A,j\in B]$ corresponding to the multiset of points $A\cup B\subset\Z$, cf. \cite[p. 33]{St96}. Here, we have to consider the grading in which a variable corresponding to $j\in\Z$ has degree $j$. Indeed, each element $(d,v,w)$ in the cone over $\{1\}\times\Pp\times\Qp$ yields two elements $v,w$ in the cones over $\Pp$ and $\Qp$. The decomposition of these elements into Hilbert basis elements gives a binomial $m_1-m_2$ in the toric ideal above. Any primitive binomial $n_1-n_2$ such that $n_i|m_i$ gives a decomposition of $(d,v,w)$. From this we can check that $f(2,3)\leq 6$.

Notice that it is enough to consider those elements of the Graver basis $n_1-n_2$, where $n_1$ is a monomial only in variables corresponding to the set $A$ and $n_2$ is a monomial only in variables corresponding to the set $B$.
\end{proof}

\section{Examples}\label{Section3}

Let $G$ be a graph with vertex set $V$ and edge set $E$. \emph{Edge polytope} $\Pp(G)$ of the graph $G$ is a polytope in the lattice $\Z^V$ with vertices $V(e)$ corresponding to edges $e\in E$. Points $V(e)\in\Z^V$ are defined by:
$$V(e)_v=
\begin{cases}
0 &\text{ if } v\notin e,\\
1 &\text{ if } v\in e.\\
\end{cases}$$
Polytopes $\Pp(G)$ were defined by Ohsugi and Hibi \cite{OhHi98}, see also \cite{OhHi99}.

\begin{proposition}[Herzog, Hibi, Ohsugi, \cite{OhHeHi00} Example 3.6 b)]\label{PropositionUnimodular}
Let $G$ be a connected graph. Edge polytope $\Pp(G)$ is unimodular if and only if $G$ does not contain two disjoint odd cycles.
\end{proposition}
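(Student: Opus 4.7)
The plan is to translate unimodularity of $\Pp(G)$ into a determinantal condition on square submatrices of the unsigned vertex-edge incidence matrix $M(G)$ of $G$, and then invoke the classical graph-theoretic formula describing such determinants.

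First I would use the following reformulation of unimodularity: $\Pp(G)$ is unimodular if and only if every top-dimensional lattice simplex whose vertices lie in $\{V(e):e\in E\}$ has normalized volume $1$ (each such simplex appears in some triangulation of $\Pp(G)$, for instance by a pulling construction starting from it). Fix such a simplex, corresponding to $d+1$ edges $e_1,\ldots,e_{d+1}$ with affinely independent incidence vectors, where $d=\dim\Pp(G)$. Let $H$ denote the subgraph on $V(G)$ whose edge set is $\{e_1,\ldots,e_{d+1}\}$, and let $N$ be the square submatrix of $M(G)$ indexed by $V(H)$ and $E(H)$. A short computation, exploiting that each column of $M(G)$ has entry sum $2$ (and, when $G$ is bipartite with parts $A,B$, the separate identities $\sum_{v\in A}V(e)_v=\sum_{v\in B}V(e)_v=1$), shows that the normalized volume of the simplex equals $|\det N|$ in the bipartite case and $\tfrac12|\det N|$ in the non-bipartite case.

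The core ingredient is the classical determinant formula: a nonsingular square submatrix $N$ of the unsigned incidence matrix of a graph satisfies
\[|\det N|=2^{c(H)},\]
where $c(H)$ is the number of connected components of $H$ containing a cycle, and $N$ is nonsingular only when each component of $H$ is either a tree or a unicyclic graph whose unique cycle is odd. I would prove this by induction on $|V(H)|$: any vertex of $H$ of degree $1$ allows Laplace expansion, reducing to a strictly smaller graph; once all leaves are removed the remaining graph is a vertex-disjoint union of cycles, and a direct computation gives $\pm 2$ for an odd cycle and $0$ for an even cycle.

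Combining the two facts, $\Pp(G)$ fails to be unimodular precisely when some admissible $H$ satisfies $c(H)\ge 2$. In that case, two cycles from distinct components of $H$ are automatically vertex-disjoint, providing two vertex-disjoint odd cycles in $G$. Conversely, given two vertex-disjoint odd cycles $C_1,C_2\subseteq G$, I would extend their edges to an admissible $H$ with $c(H)\ge 2$ by attaching each vertex of $V(G)\setminus V(C_1\cup C_2)$ as a tree-leaf to either $C_1$'s or $C_2$'s component, possible by connectivity of $G$; this produces a non-unimodular maximal simplex. The main obstacle is establishing the determinantal formula $|\det N|=2^{c(H)}$ and, in parallel, carrying out the routine bookkeeping that relates $|\det N|$ to the normalized volume of the simplex in the affine lattice of $\Pp(G)$; once these are in hand, the biconditional follows immediately.
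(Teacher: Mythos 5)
The paper does not prove this proposition at all: it is quoted as a known result of Ohsugi, Herzog and Hibi (\cite{OhHeHi00}, Example 3.6 b)), so there is no internal proof to compare against. Judged on its own, your plan is a correct and essentially complete route to the statement, and it is the standard one: reduce unimodularity to the normalized volume of an arbitrary full-dimensional simplex on the vertices $V(e)$ (legitimate, since every such simplex starts a placing triangulation and the lattice points of $\Pp(G)$ are exactly its vertices), convert that volume to a determinant of a square piece of the unsigned incidence matrix, and invoke the formula $|\det N|=2^{c(H)}$ together with the characterization of nonsingularity (all components unicyclic with odd cycle, when $H$ is spanning with $|E(H)|=|V(G)|$). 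The volume bookkeeping checks out: for connected non-bipartite $G$ the vertices affinely generate $\Z^V\cap\{\sum x_v=2\}$ and one gets $\mathrm{nvol}=\tfrac12|\det N|=2^{c(H)-1}$, so non-unimodularity is exactly $c(H)\ge 2$, which yields two vertex-disjoint odd cycles; your converse construction (grow the two cycle components to a spanning subgraph by attaching tree edges, using connectivity) is also sound. One presentational slip: as written, $N$ indexed by all of $V(G)$ and by $E(H)$ is square only in the non-bipartite case ($d+1=|V(G)|$); in the bipartite case $d+1=|V(G)|-1$ and $N$ is not square, but that case is vacuous anyway (no odd cycles exist, and the incidence matrix of a bipartite graph is totally unimodular), so you should just dispose of it separately rather than fold it into the determinant formula. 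With that caveat the argument is complete.
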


In this Section we calculate Hilbert basis and gap vector of families of segmental fibrations of edge polytopes $\Pp_k:=\Pp(C_{2k})$ of an even cycle $C_{2k}$, and $\Qp:=\Pp(K_4)$ of the clique $K_4$. By the above proposition these polytopes are very ample, however it follows also from the fact that gap vectors are finite (see Theorems \ref{thm:gap}, \ref{thm:gap2}).

The reason why we consider graphs $C_{2k}$ and $K_4$ is that these are graphs with the property that every even cycle passes though all vertices, additionally they do not contain two disjoint odd cycles.

\subsection{Definition of $\Pp_{k,a}$}

Let us denote vertices of $C_{2k}$ appearing along the cycle by $1,\dots,2k$. We denote edges by $(i,i+1)$ and the corresponding vertices of the polytope $\Pp_k$ by $V(i,i+1)$. We are going to consider a polytope $\Pp_{k,a}\subset\Z^k\times\Z$ defined by vertices
$$(V(i,i+1),0),(V(i,i+1),1)\text{ for }i=2,3,\dots,2k,$$
$$\text{and }(V(1,2),a),(V(1,2),a+1).$$
Clearly, projection $f:\Z^{2k}\times\Z\ni\Pp_{k,a}\rightarrow\Pp_k\in\Z^{2k}$ is a lattice segmental fibration. Let us denote the cone over $1\times\Pp_{k}$ by $\Cp_{k}$, and the cone over $1\times\Pp_{k,a}$ by $\Cp_{k,a}$ (with $1$ on the $0$-th coordinate). We can extend projection $f$ to
$$f:\Z\times\Z^{2k}\times\Z\supset \Cp_{k,a}\rightarrow\Cp_k\subset\Z\times\Z^{2k}.$$
Let $B_{k}$ be the set of vertices of $1\times\Pp_{k}$, and $B_{k,a}$ be the set of vertices of $1\times\Pp_{k,a}$. Let $\mathbf{1}:=(1,\dots,1)\in\Z^{2k}$, and let $A_{k,a}:=(k,\mathbf{1},[k+1,a-1])$ be the set of $k-a-1$ points in $\Z\times\Z^{2k}\times\Z$.

Observe that points $v$ in the cones $\Cp_k,\Cp_{k,a}$ satisfy the following:
\begin{equation}\label{1}
v_0=v_1+v_3+\dots+v_{2k-1}=v_2+v_4+\dots+v_{2k},
\end{equation}
\begin{equation}\label{2}
0\leq v_i\leq v_{i-1}+v_{i+1}\text{ for every }i=1,\dots,2k.
\end{equation}
Points in the cone $\Cp_{k,a}$ additionally satisfy two more inequalities:
\begin{equation}\label{3}
0\leq v_{2k+1}\leq (a+1)v_1+v_3+\dots+v_{2k-1},
\end{equation}
\begin{equation}\label{4}
0\leq v_{2k+1}\leq (a+1)v_2+v_4+\dots+v_{2k}.
\end{equation}
It is also not hard to argue that indeed the above equalities and inequalities define these cones.

We will need two lemmas about the cone $\Cp_{k}$.

\begin{lemma}\label{Lemma0}
Suppose a point $v\in\Cp_{k}$ satisfies $v_i=0$ for some $i\in\{1,\dots,2k\}$. Then $v$ has a unique expression as a non-negative linear combination of points from $B_{k}$, moreover the coefficients are integers.
\end{lemma}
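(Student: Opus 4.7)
The plan is to reduce the problem to the cone over the edge polytope of a path and then invoke unimodularity. By the cyclic symmetry of $C_{2k}$ (rotation of the vertices), we may assume $i = 1$, so that $v_1 = 0$. In any non-negative combination $v = \sum_{j=1}^{2k} c_j (1, V(e_j))$ with $e_j = (j, j+1)$ and indices taken modulo $2k$, reading off the coordinate at vertex $1$ gives $v_1 = c_1 + c_{2k} = 0$, which forces $c_1 = c_{2k} = 0$ because each $c_j \geq 0$. Hence $v$ lies in the subcone generated by the $2k-2$ rays $(1, V(e_j))$ for $j = 2, \ldots, 2k-1$, i.e., in the cone over $\{1\} \times \Pp(P)$, where $P$ is the path obtained from $C_{2k}$ by deleting vertex $1$.

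Next I invoke two properties of $\Pp(P)$. First, $\Pp(P)$ is a simplex: the incidence matrix of a tree on $n$ vertices has rank $n - 1$, so the vectors $V(e_j)$ for $j = 2, \ldots, 2k-1$ are linearly (and hence affinely) independent. Second, since a tree contains no odd cycles at all, it certainly contains no two disjoint ones, so by Proposition \ref{PropositionUnimodular} the polytope $\Pp(P)$ is unimodular. A unimodular simplex generates a \emph{smooth} simplicial cone: its ray generators form a basis of the saturation (in $\Z \times \Z^{2k}$) of the sublattice they span. Consequently every lattice point in this cone has a unique expression as a non-negative integer combination of the generators, and applying this to $v$ yields all three assertions of the lemma.

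The main conceptual step is the observation that $v_i = 0$ forces the two coefficients corresponding to the edges incident to $i$ to vanish, collapsing the cyclic combinatorics to a tree's; the rest is a direct appeal to the unimodularity of tree edge polytopes. If a self-contained argument avoiding Proposition \ref{PropositionUnimodular} is preferred, the coefficients can instead be computed by the recursion $c_2 = v_2$ and $c_j = v_j - c_{j-1}$ for $j \geq 3$, with consistency $c_{2k-1} = v_{2k}$ following from the balance equation \eqref{1}; integrality is then immediate, and non-negativity can be obtained (somewhat more delicately) from \eqref{2} by a strengthened induction simultaneously tracking $c_j \geq 0$ and a suitable upper bound on $c_j$.
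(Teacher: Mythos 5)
Your proof is correct. The paper's own argument is just the one-line version of your closing remark: it observes (implicitly using $v_i=0$ and non-negativity to kill the coefficients of the two edges at vertex $i$) that the coefficient of $V(j,j+1)$ is forced to be the alternating sum $v_j-v_{j-1}+\dots\pm v_i$, which is an integer, and uniqueness follows since the coefficients are determined. Your primary route is structurally different in presentation: you first reduce to the cone over the edge polytope of the path obtained by deleting vertex $i$, note that this is a simplex (linear independence of tree incidence vectors) which is unimodular by Proposition \ref{PropositionUnimodular}, and conclude via smoothness of the resulting simplicial cone. This buys a cleaner conceptual picture (the cycle degenerates to a tree, and tree edge cones are smooth) at the cost of invoking the cited unimodularity result as a black box, whereas the paper's recursion is self-contained and produces the coefficients explicitly. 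One small simplification to your sketch: non-negativity of the coefficients never needs a separate delicate induction from \eqref{2}, since $v\in\Cp_k$ already guarantees the existence of \emph{some} non-negative real combination; only uniqueness and integrality require an argument, and both follow from your recursion (or from smoothness of the path cone).
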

\begin{proof}
It is easy to prove by induction on $j$ that the coefficient of $V(j,j+1)$ is equal to $v_j-v_{j-1}+v_{j-2}-v_{j-3}+\dots\pm v_i.$
\end{proof}

\begin{lemma}\label{Lemma1}
Suppose there is an equality $V(e_1)+\dots+V(e_n)=V(e_1')+\dots+V(e_n')$ in the cone $\Cp_{k}$ for some $e_i,e_i'\in E(C_{2k})$. Then the formal difference of multisets $\{e_1,\dots,e_n\}-\{e_1',\dots,e_n'\}$ is a multiple of $\{(1,2),\dots,(2k-1,2k)\}-\{(2,3),\dots,(2k,1)\}$.
\end{lemma}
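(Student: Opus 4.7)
The plan is to translate the equation $\sum V(e_i) = \sum V(e_i')$ into a single vanishing linear combination of the vertices of $\Pp_k$ with integer coefficients, and then solve the resulting cyclic linear recursion. Concretely, for $i = 1, \ldots, 2k$ let $a_i \in \Z$ denote the multiplicity of the edge $(i,i+1)$ in the multiset $\{e_1,\ldots,e_n\}$ minus its multiplicity in $\{e_1',\ldots,e_n'\}$, indices taken cyclically modulo $2k$. The hypothesis rewrites as
$$\sum_{i=1}^{2k} a_i\, V(i,i+1) = 0 \qquad \text{in } \Z^{2k},$$
and the goal becomes: show that $(a_1,\ldots,a_{2k})$ is an integer multiple of $(1,-1,1,-1,\ldots,1,-1)$.

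To do this I would read off the equation coordinate by coordinate. Since $V(i,i+1)_j$ is nonzero precisely when $j \in \{i,i+1\}$, the $j$-th coordinate of the sum equals $a_{j-1} + a_j$ (cyclically). Setting this to zero for every $j$ gives the recursion $a_j = -a_{j-1}$, whose solution on $\Z/2k\Z$ is $a_j = (-1)^{j-1}a_1$. The cyclic wrap-around condition $a_{2k} = -a_1$ is automatic because $2k$ is even, so the entire vector is determined by the single integer $a_1$.

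Finally, the vector $a_1 \cdot (1,-1,1,-1,\ldots,1,-1)$ is by definition $a_1$ times the formal multiset difference $\{(1,2),(3,4),\ldots,(2k-1,2k)\} - \{(2,3),(4,5),\ldots,(2k,1)\}$, which is precisely the relation stated in the lemma. There is no real obstacle in the argument; the one substantive point is the parity check at the cyclic boundary, which reflects the fact that $C_{2k}$ is bipartite. For an odd cycle this consistency would force $a_1 = 0$, which is why the hypothesis restricts to even cycles and is compatible with the unimodularity of $\Pp_k$ guaranteed by Proposition \ref{PropositionUnimodular}.
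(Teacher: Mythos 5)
Your proof is correct and rests on the same key observation as the paper's: reading off the equality at each vertex $j$ forces the multiplicity differences of the two edges meeting $j$ to cancel, i.e. $a_j=-a_{j-1}$, with the cyclic consistency supplied by the evenness of the cycle. The paper extracts the conclusion by repeatedly finding and subtracting one copy of the alternating difference and inducting, whereas you solve the recursion in closed form in one step; these are the same argument in substance.
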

\begin{proof}
If an edge $(i-1,i)$ is with $+$ sign, then by looking at vertex $i$ we get that $(i,i+1)$ is with sign $-$. Therefore we can find $\{(1,2),\dots,(2k-1,2k)\}-\{(2,3),\dots,(2k,1)\}$. We can subtract it, and the rest follows by induction.
\end{proof}

\subsection{Hilbert basis of $\Pp_{k,a}$}

\begin{theorem}
The set $B_{k,a}\cup A_{k,a}$ is the Hilbert basis of $\Cp_{k,a}$.
\end{theorem}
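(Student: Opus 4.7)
My plan is to establish both directions of the Hilbert basis claim: that $B_{k,a}\cup A_{k,a}$ generates the semigroup $\Cp_{k,a}\cap\Z^{2k+2}$, and that none of its elements is decomposable in terms of the others.

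\textbf{Indecomposability} is the easier direction. Elements of $B_{k,a}$ have degree one in $v_0$ and are trivially irreducible. For $w=(k,\mathbf{1},j)\in A_{k,a}$, any proper decomposition $w=\sum g_i$ with $g_i\in B_{k,a}\cup A_{k,a}$ cannot use another $A_{k,a}$-generator, because each such generator already has $v_0=k=v_0(w)$; hence all $g_i$ come from $B_{k,a}$ and there are exactly $k$ of them. Projecting by $f$ to $\Cp_k$ yields $(k,\mathbf{1})=\sum V(e_i)$, which means $\{e_i\}$ covers every vertex of $C_{2k}$ exactly once. By Lemma \ref{Lemma1} the only perfect matchings of $C_{2k}$ are $M_1=\{(1,2),(3,4),\dots\}$ and $M_2=\{(2,3),\dots,(2k,1)\}$; summing heights, a $B_{k,a}$-lift of $M_1$ realises $v_{2k+1}\in[a,a+k]$ and of $M_2$ realises $v_{2k+1}\in[0,k]$. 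Since $j\in[k+1,a-1]$ lies in neither range, no such decomposition exists.

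\textbf{Generation} I would prove by induction on $v_0$. Given $v\in\Cp_{k,a}\cap\Z^{2k+2}$ with $v_0\ge 1$, Proposition \ref{PropositionUnimodular} provides an integer decomposition $\bar v=\sum c_e V(e)$ in $\Cp_k$, and Lemma \ref{Lemma1} shows that $c:=c_{(1,2)}$ runs over an integer interval $[c_{\min},c_{\max}]$. Such a decomposition lifts to an integer decomposition of $v$ via $B_{k,a}$ iff $v_{2k+1}\in[ca,ca+v_0]$: each of the $c$ copies of $V(1,2)$ contributes $a$ or $a+1$, and each of the remaining $v_0-c$ edges contributes $0$ or $1$. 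If some $c$ works (Case A), I subtract one lifted generator and apply induction. Otherwise (Case B), $v\in\Cp_{k,a}$ forces $v_{2k+1}$ into an internal gap $[ca+v_0+1,(c+1)a-1]$ with $c\in[c_{\min},c_{\max}-1]$, so in particular $c_{\max}>c_{\min}$. An alternating-move analysis based on Lemma \ref{Lemma1} then forces $v_i\ge 1$ for all $i$ (otherwise two adjacent edges have coefficient $0$ in every decomposition, freezing the alternating move), $v_i<v_{i-1}+v_{i+1}$ for all $i$ (equality similarly freezes both moves), and $v_0\ge k$ (the $c_{\min}$-decomposition must use every $M_2$-edge at least once). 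Then I subtract $g=(k,\mathbf{1},j)\in A_{k,a}$ for $j\in[k+1,a-1]$ chosen so that $v-g$ satisfies (\ref{3}) and (\ref{4}); the constraint interval on $j$ is non-empty thanks to $v_{2k+1}\le(c+1)a-1$ and $v_0\ge k$, and induction on $v_0$ closes the argument.

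The hardest part is the gap case, where three things must be controlled simultaneously: extracting the rigidity on $v$ from $c_{\max}>c_{\min}$, producing a feasible $j$, and confirming that $v-g$ actually belongs to $\Cp_{k,a}$ rather than only the polyhedron cut out by (\ref{1})--(\ref{4}). The last point is the subtlest and would be handled by explicitly decomposing $\bar v-(k,\mathbf{1})$ in $\Cp_k$, obtained by subtracting either $M_1$ or $M_2$ from a well-chosen decomposition of $\bar v$ and matching $v_{2k+1}-j$ against the achievable range of the resulting lift.
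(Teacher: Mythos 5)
Your proof is correct in outline, but it takes a genuinely different route from the paper on the generation side. The paper's argument is a descent by degree-one generators: for $v_0<k$ some coordinate $v_i$ vanishes, so the decomposition of $f(v)$ in $\Cp_k$ is unique (Lemma \ref{Lemma0}) and only the last coordinate needs adjusting (Lemma \ref{LemmaInteger}); for $v_0=k$ either the same applies or $v=(k,\mathbf{1},\ast)$, which is exactly where $A_{k,a}$ enters; and for $v_0>k$ some $v_i\geq 2$, so after redistributing the real barycentric coefficients one vertex $w\in B_{k,a}$ over an edge at $i$ gets coefficient at least $1$, whence $v-w$ is still in the cone and one inducts --- no analysis of which decompositions of $f(v)$ lift is ever needed. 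You instead classify points by whether some integral decomposition of $f(v)$ lifts, and in the obstructed case extract rigidity ($v_i\geq1$, strictness of (\ref{2}), $v_0\geq k$) in order to subtract an element of $A_{k,a}$. This works --- your freezing arguments via Lemma \ref{Lemma1} are sound, and the feasibility of $j$ does follow from $v_{2k+1}\leq(c+1)a-1$, $c+1\leq\min\{v_1,v_2\}$ and $v_0\geq k$ --- and it buys you, essentially for free, the explicit description of the holes that the paper re-derives separately in Claim \ref{Claim2} for Theorem \ref{thm:gap}. The price is two points you only flag: you must check that the integer range $[c_{\min},c_{\max}]$ of $c_{(1,2)}$ has the same endpoints as the real range of $\lambda_{(1,2)}$ (this holds because the coefficients along the segment of decompositions change by $\pm1$ per unit step, so the blocking coefficients vanish at integer parameters; it is what guarantees the gap is genuinely internal rather than below $ac_{\min}$ or above $ac_{\max}+v_0$), and you must either trust that (\ref{1})--(\ref{4}) cut out $\Cp_{k,a}$ (the paper asserts this without proof) or carry out the explicit re-decomposition of $f(v)-(k,\mathbf{1})$ you sketch. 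Your indecomposability argument coincides with the paper's level-$k$ analysis via the two perfect matchings of $C_{2k}$.
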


\begin{proof}
We will apply the following lemma to the last, $(2k+1)$-st, coordinate.

\begin{lemma}\label{LemmaInteger}
Suppose $c=z_1+\dots+z_r$ for some $z_i\in [a_i,b_i]$, where $a_i,b_i\in\Z$. If  $c$ is an integer, then $c=z'_1+\dots+z'_r$ for some $z'_i\in [a_i,b_i]\cap\Z$.
\end{lemma}
\begin{proof}
Suppose $c=z_1+\dots+z_r$ for $z_i\in [a_i,b_i]$ and the number of non-integers among $z_i$ is minimum. If there is a non-integer, then since $c$ is an integer, there must be at least two non-integers. In this case one can increase one of them and decrease the other until one of them (at least) reaches an integer. They still belong to the corresponding intervals, contradicting the minimality of non-integers among $z_i$.
\end{proof}

Clearly, all vertices of $1\times\Pp_{k,a}$, that is the set $B_{k,a}$, must be in the Hilbert basis of $\Cp_{k,a}$.

\begin{claim}\label{Claim1}
Elements of $B_{k,a}$ generate all lattice points $v\in\Cp_{k,a}$ satisfying $v_i=0$ for some $i\in\{1,\dots,2k\}$.
\end{claim}
\begin{proof}
Since $v\in\Cp_{k,a}$ we have that $v$ is a non-negative linear combination of elements of $B_{k,a}$. Now $f(v)\in\Cp_{k}$ satisfies $f(v)_i=0$. Due to Lemma \ref{Lemma0} the sums of coefficients of vertices corresponding to each edge are non-negative integers. Using Lemma \ref{LemmaInteger} one can adjust the last coordinate, that is assure that all coefficients are non-negative integers.
\end{proof}

Claim \ref{Claim1} in particular means that elements of $B_{k,a}$ generate all points $v\in\Cp_{k,a}$ with $v_0<k$. This is because (\ref{1}) $v_1+\dots+v_{2k}=2v_0<2k$ and $v_i$ are non-negative integers, so we get that $v_i=0$ for some $i\in\{1,\dots,2k\}$.

Let us consider points $v\in\Cp_{k,a}$ with $v_0=k$. If $v_i=0$ for some $i\in\{1,\dots,2k\}$, then by Claim \ref{Claim1} $v$ is generated by elements of $B_{k,a}$. Otherwise, for every $i\in\{1,\dots,2k\}$ we have $v_i=1$. From (\ref{3}) it follows that $v_{2k+1}\in\{0,\dots,a+k\}$.

Element $v\in\Cp_{k,a}$ with $v_0=k$ and $v_1=\dots=v_{2k}=1$ can be achieved only in two ways as a non-negative integer linear combination of elements $B_{k,a}$. By taking vertices corresponding to edges $(1,2),(3,4),\dots,(2k-1,2k)$, for each edge one vertex. Then $v_{2k+1}\in\{0,\dots,k\}$. Or, by taking vertices corresponding to edges $(2,3),(4,5),\dots,(2k,1)$, for each edge one vertex. Then $v_{2k+1}\in\{a,\dots,a+k\}$. Elements of $A_{k,a}$ are exactly the missing ones.

Consider points $v\in\Cp_{k,a}$ with $v_0>k$. We will prove by induction on $v_k$ that they are generated by the set $B_{k,a}\cup A_{k,a}$. Since (\ref{1}) $v_1+\dots+v_{2k}=2v_0>2k$ we have that $v_i\geq 2$ for some $i\in\{1,\dots,2k\}$.  Point $v$ is a non-negative linear combination of elements of $B_{k,a}$. The sum of coefficients of vertices corresponding to the edge $(i-1,i)$ and to the edge $(i,i+1)$ equals to $v_i$. Without loss of generality we can assume that the sum of coefficients corresponding to the edge $(i,i+1)$ is greater or equal to $1$. Now, similarly to the proof of Lemma \ref{LemmaInteger}, we can assure that the coefficient of a vertex $w\in B_{k,a}$ corresponding to the edge $(i,i+1)$ is greater or equal to $1$. Then $v-w\in\Cp_{k,a}$, and the assertion follows by induction.
\end{proof}

\subsection{Gap vector of $\Pp_{k,a}$}

\begin{theorem}\label{thm:gap}
The gap vector of $\Pp_{k,a}$ equals to:
$$\gamma(\Pp_{k,a})_i=
\begin{cases}
0 &\text{ if }i<k,\text{ or }a-2<i,\\
(a-i-1){{i+k-1}\choose{2k-1}}&\text{ if } k\leq i\leq a-2.\\
\end{cases}$$
\end{theorem}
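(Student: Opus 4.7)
The plan is to fiber the gap count over the projection $f:\Cp_{k,a}\to\Cp_k$ onto the cycle coordinates. Fix a base $(v_1,\ldots,v_{2k})$ with $(i,v_1,\ldots,v_{2k})\in\Cp_k$. By Lemma \ref{Lemma1}, the non-negative integer decompositions $\sum_e c_e V(e)=(v_1,\ldots,v_{2k})$ form a one-parameter family indexed by $c_{(1,2)}\in[c_{\min},c_{\max}]\cap\Z$; the bounds depend on the base and are integers because all defining constraints have integer data. Each copy of an edge $e\ne(1,2)$ lifts in $B_{k,a}$ to a vertex whose last coordinate lies in $\{0,1\}$, and each copy of $(1,2)$ to $\{a,a+1\}$; hence a decomposition with $c_{(1,2)}=c$ lifts to elements of $B_{k,a}$ whose sums realize exactly the integer values $v_{2k+1}\in[ac,ac+i]$. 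Taking unions over $c$ shows that the lattice points above the base in the semigroup generated by $B_{k,a}$ form $\bigcup_{c=c_{\min}}^{c_{\max}}[ac,ac+i]\cap\Z$, while the same analysis carried out with real decompositions identifies the lattice points of $\Cp_{k,a}$ above the base with $[ac_{\min},ac_{\max}+i]\cap\Z$.

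For $k\leq i\leq a-2$ the intervals $[ac,ac+i]$ are pairwise disjoint (since $i<a$), so the gap contribution of the base equals
$$\bigl(a(c_{\max}-c_{\min})+i+1\bigr)-\bigl(c_{\max}-c_{\min}+1\bigr)(i+1)=(a-i-1)(c_{\max}-c_{\min});$$
for $i\geq a-1$ adjacent intervals abut or overlap and the contribution vanishes. Bases with some $v_j=0$ admit a unique decomposition (deleting the vertex breaks the cycle into a path whose edge multiplicities are forced, cf.\ Lemma \ref{Lemma0}), so $c_{\min}=c_{\max}$ and they contribute nothing; in particular, for $i<k$ no base satisfies $v_j\geq 1$ for all $j$, giving $\gamma(\Pp_{k,a})_i=0$. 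It therefore suffices, for $k\leq i\leq a-2$, to evaluate $S:=\sum_{(v_j):\,v_j\geq 1}(c_{\max}-c_{\min})$ and conclude $\gamma(\Pp_{k,a})_i=(a-i-1)S$.

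The key observation---and the main obstacle of the proof---is that $c_{\max}-c_{\min}$ equals the number of decompositions of the given base whose $c_{(1,2)}$ is strictly above $c_{\min}$; by Lemma \ref{Lemma1} these are exactly those from which one may subtract $M-N$ (with $M:=\{(1,2),(3,4),\ldots,(2k-1,2k)\}$ and $N:=\{(2,3),(4,5),\ldots,(2k,1)\}$) and remain non-negative, which happens iff $c_e\geq 1$ for every $e\in M$. Since each vertex of $C_{2k}$ is incident to exactly one edge of $M$, this condition automatically forces $v_j\geq 1$ at every vertex, so the double sum collapses to
$$S=\#\bigl\{(c_e)_{e\in E(C_{2k})}\,:\,c_e\in\Z_{\geq 0},\ \textstyle\sum_e c_e=i,\ c_e\geq 1 \text{ for } e\in M\bigr\}.$$
Substituting $c_e=1+c'_e$ for $e\in M$ reduces this to counting nonnegative integer $2k$-tuples summing to $i-k$, yielding $S=\binom{i+k-1}{2k-1}$ and the stated formula.
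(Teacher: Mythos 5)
Your proof is correct and follows essentially the same route as the paper's: both arguments use Lemma \ref{Lemma1} to see that the decompositions over a fixed base form a chain whose lifts realize intervals $[ac,ac+i]$ of length $i+1$ spaced $a$ apart, so that each adjacent pair leaves a gap of $a-i-1$ lattice points, and both count the number of such gaps by the same bijection with multisets of edges (your multisets $D\supseteq M$ containing the matching correspond exactly to the paper's multisets $M$ of cardinality $i-k$ in Claim \ref{Claim2}). The only difference is organizational -- you tally holes fiber by fiber over $\Cp_k$, whereas the paper writes down the hole set globally as an explicit disjoint union -- and this does not change the substance of the argument.
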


\begin{proof}
We will describe the set of gaps explicitly. Let $\mathbf{1}:=(1,\dots,1)\in \Z^{2k}$. Recall that for an edge $e\in C_{2k}$ we have $h_u(V(e))=1,h_l(V(e))=0$ if $e\neq (1,2)$, and $h_u(V(e))=a+1,h_l(V(e))=a$ if $e=(1,2)$. For a multiset $M$ of edges from $C_{2k}$ let $h_u(M),h_l(M),S(M)$ be the sums of functions $h_u(V(e)),h_l(V(e)),V(e)$ accordingly over elements of $e\in M$.

\begin{claim}\label{Claim2}
The set of gaps at level $i+k$ (having zero coordinate equal to $k+i$) is the union of the following disjoint sets:
$$(k+i,\mathbf{1}+S(M),[k+1+h_u(M),a-1+h_l(M)]),$$
over all multisubsets $M\subset E(C_{2k})$ of cardinality $i$.
\end{claim}
\begin{proof}
If $S(M_1)=S(M_2)$, then the formal difference $M_1-M_2$ is a multiple of $(1,2)+\dots+(2k-1,2k)-(2,3)-\dots-(2k,1)$. Then, functions $h_l,h_u$ differ by a multiple of $a$, so the sets are indeed disjoint.


By Proposition \ref{PropositionUnimodular} polytope $\Pp_{k}$ is unimodular, hence normal. Thus points of $\Cp_{k,a}$ are of the form $(i,S(M),x)$ for some multiset of edges $M$ and integer $x$. Let us fix $i$ and $S:=S(M)$. Consider all multisets $M$ such that $S(M)=S$. Due to Lemma \ref{Lemma1} they are exactly:
$$M'+c\{(1,2),\dots,(2k-1,2k)\},\dots$$
$$\dots,M'+(c-r)\{(1,2),\dots,(2k-1,2k)\}+r\{(2,3),\dots,(2k,1)\},\dots$$
$$\dots,M'+c\{(2,3),\dots,(2k,1)\}\text{ for some $M'$}.$$
Elements of the lattice generated by $\{1\}\times\Pp_{k,a}$ corresponding to a multiset $M$ have the last coordinate $x(M)\in [h_l(M),h_u(M)]$. Sets
$$(k+i,\mathbf{1}+S(M),[k+1+h_u(M),a-1+h_l(M)])$$ are exactly the gaps between consecutive intervals.
\end{proof}

For every multiset of edges $M$ of cardinality $i$ we have
$$a-1+h_l(M)-(k+1+h_u(M))=a-i-k-1.$$
Moreover, the number of multisets of cardinality $i$ of a $2k$-element set equals to ${{i+2k-1}\choose{2k-1}}$. Multiplying these numbers we get the assertion.
\end{proof}

\begin{corollary}\label{Corollaryk}
The polytope $\Pp_{k,k+2}$ has exactly one gap which is in degree $k$, that is $\gamma(\Pp_{k,k+2})=(0,\dots,0,1)$, where $1$ is on position $k$. Moreover, the polytope $s\Pp_{k,k+2}$ is normal if and only if $s$ does not divide $k$ or $s\geq k$. In particular:
\begin{enumerate}
\item $\mu_{Hilb}(\Pp_{k,k+2})=k$,
\item $\mu_{midp}(\Pp_{k,k+2})$ equals to the smallest non divisor of $k$,
\item $\mu_{idp}(\Pp_{k,k+2})$ equals to the highest proper divisor of $k$ plus $1$.
\end{enumerate}
\end{corollary}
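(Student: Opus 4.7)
The plan is to first read off the gap vector and Hilbert basis from the preceding results, then establish the key equivalence that $s\Pp_{k,k+2}$ is normal if and only if $s \ge k$ or $s \nmid k$, and finally deduce (1)--(3) from this equivalence.

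First I would substitute $a = k+2$ into Theorem~\ref{thm:gap}. The interval $k \le i \le a-2$ collapses to the single value $i=k$, where the formula evaluates to $(k+2-k-1)\binom{2k-1}{2k-1}=1$, so $\gamma(\Pp_{k,k+2})$ has a single nonzero entry $\gamma_k=1$. Similarly $A_{k,k+2}$ reduces to the single point $g:=(k,\mathbf{1},k+1)$, so by the theorem characterizing the Hilbert basis of $\Cp_{k,a}$ the basis of $\Cp_{k,k+2}$ is $B_{k,k+2}\cup\{g\}$ with $g$ the unique gap at level $k$. This is already (1).

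Next I would prove the main equivalence. For the forward direction, suppose $s<k$ and $s\mid k$, writing $k=sm$ with $m\ge 2$. Since $\gamma$ vanishes below level $k$, every lattice point of $s\Pp_{k,k+2}$ is a sum of $s$ elements of $B_{k,k+2}$. A normal decomposition of $g$ as a sum of $m$ lattice points of $s\Pp_{k,k+2}$ would then express $g$ as a sum of $sm=k$ elements of $B_{k,k+2}$, contradicting that $g$ is a gap.

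For the converse I would split into two cases. If $s<k$ and $s\nmid k$, then no positive multiple of $s$ equals $k$, so $\gamma$ vanishes at every multiple of $s$; any lattice point $v$ of level $Ns$ is therefore a sum of $Ns$ elements of $B_{k,k+2}$, which one simply partitions into $N$ blocks of size $s$. If $s\ge k$, the technical observation I expect to carry the argument is that $2g$ lies in the semigroup generated by $B_{k,k+2}$, because $\gamma_{2k}=0$. Using this relation to repeatedly trade two copies of $g$ for $B$-generators, any $v\in\Cp_{k,k+2}$ admits an expression $v=\alpha g+\sum_j\beta_j b_j$ with $\alpha\in\{0,1\}$. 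If $\alpha=0$ one partitions the level-$1$ generators into $N$ blocks of size $s$; if $\alpha=1$ one puts $g$ together with $s-k\ge 0$ level-$1$ generators into one level-$s$ block and partitions the remaining $(N-1)s$ generators into $N-1$ blocks of size $s$. The main obstacle in writing up is precisely this ``at most one copy of $g$'' reduction, which hinges on $\gamma_{2k}=0$.

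Finally, (2) and (3) follow by inspecting $S:=\{s\in\Z_{>0}:s\Pp_{k,k+2}\text{ is normal}\}$, whose complement in $\Z_{>0}$ is exactly the set of proper divisors of $k$. The minimum of $S$ is therefore the smallest non-divisor of $k$, giving (2), and the threshold above which every positive integer lies in $S$ is one more than the largest proper divisor of $k$, giving (3).
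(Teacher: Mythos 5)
Your proof is correct and follows essentially the same route as the paper: read the unique hole $g=(k,\mathbf{1},k+1)$ off Theorem~\ref{thm:gap}, use $g\in\frac{k}{s}(s\Pp)$ to kill normality when $s$ properly divides $k$, and use vanishing of the gap vector at all multiples of $s$ to regroup level-$1$ generators otherwise. The one place you go beyond the paper is the case $s\ge k$ (in particular $s=k$), which the paper leaves implicit but you handle cleanly via the observation that $2g$ decomposes over $B_{k,k+2}$ since $\gamma_{2k}=0$.
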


\begin{proof}
The first sentence follows directly by Theorem \ref{thm:gap}. In particular, there is exactly one element of the Hilbert basis $v$ of degree greater than $1$. If $s$ properly divides $k$, then $v\in \frac{k}{s}(s\Pp)$ shows that $s\Pp$ is non-normal. If $s$ does not divide $k$, then any lattice point in $m(s\Pp)$ is not a hole, so it is a sum of $ms$ integral points from $\Pp$. In particular, $s\Pp$ is normal.
\end{proof}

\subsection{Definition of $\Qp_{a,b}$}

We consider the clique $K_4$ on vertices $1,2,3,4$. We denote edges by $(i,j)$ and the corresponding vertices of the octahedron $\Qp:=\Pp(K_4)$ by $V(i,j)$. We are going to consider a polytope $\Qp_{a,b}\subset\Z^4\times\Z$ defined by vertices
$$(V(1,2),0),(V(1,2),1),$$
$$(V(2,3),0),(V(2,3),1),$$
$$(V(1,3),0),(V(1,3),1),$$
$$(V(4,1),0),(V(4,1),b),$$
$$(V(4,2),b+4a+2),(V(4,2),2b+4a+2),$$
$$(V(4,3),2b+11a+4),(V(4,3),3b+11a+4).$$
Clearly, projection $f:\Z^4\times\Z\ni\Qp_{a,b}\rightarrow\Qp\in\Z^4$ is a lattice segmental fibration.

\begin{lemma}\label{lem:normalfaces}
If a lattice polytope $\Pp$ is obtained by a lattice segmental fibration $f$ over $\Pp(K_4)$, then it has normal facets.
\end{lemma}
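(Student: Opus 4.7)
The plan is to classify the facets of the $4$-dimensional polytope $\Pp$ according to their image under $f$ and to verify normality separately for each type. A facet $F$ of $\Pp$ either projects onto a $2$-dimensional facet $T$ of $\Pp(K_4)$ (a \emph{side} facet), or onto a full-dimensional subregion $R\subset\Pp(K_4)$ corresponding to a domain of linearity of $h_u$ or $h_l$ (a \emph{top} or \emph{bottom} facet, with outer normal having nonzero last coordinate).

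For side facets, the first step is to check that every facet of the octahedron $\Pp(K_4)$ is a unimodular lattice triangle. There are eight such facets, of two combinatorial types: vertex-stars $\{V(i,j):j\neq i\}$ and edge-triangles $\{V(i,j),V(j,k),V(i,k)\}$; in both cases a direct calculation shows the edge vectors form a $\Z$-basis of the translated affine sublattice. The core step is then a general claim: any lattice segmental fibration over a unimodular simplex $\Delta=\mathrm{conv}(v_0,\ldots,v_d)$ is normal. A lattice point of the $k$-th dilate has the form $\bigl(\sum_i r_i v_i,\,z\bigr)$ with $r_i\in\Z_{\ge 0}$, $\sum_i r_i=k$, and $z\in\bigl[\sum_i r_i a_i,\,\sum_i r_i b_i\bigr]\cap\Z$ (where $[a_i,b_i]$ is the fiber segment over $v_i$); Lemma \ref{LemmaInteger} then splits $z$ into $k$ integer summands lying in the appropriate intervals, giving a decomposition into $k$ vertices of the fibration.

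For top/bottom facets, the argument identifies $F$ with a sub-polytope $R\subset\Pp(K_4)$. Since $\Pp(K_4)\cap\Z^4$ consists only of the six vertices of $\Pp(K_4)$, the region $R$ has vertices among these six points. Unimodularity of $\Pp(K_4)$ implies that any polyhedral subdivision refines to a unimodular triangulation, whose restriction gives a unimodular triangulation of $R$; hence $R$ is normal. Furthermore, the affine function $\ell$ extending $h_l|_R$ (or $h_u|_R$) takes integer values on the vertices of a unimodular $3$-simplex contained in $R$, and unimodularity forces $\ell$ to be integer-valued on the entire affine lattice $\mathrm{aff}(\Pp(K_4))\cap\Z^4$. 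The unimodular shear $(x,z)\mapsto(x,z-\ell(x))$ then identifies $F$ with $R\times\{0\}$, so normality of $F$ follows from that of $R$. The main obstacle in this case is precisely the lattice-compatibility of the lifting: both this and the normality of $R$ hinge on the unimodularity of $\Pp(K_4)$ together with the fact that its lattice points are exactly its six vertices, which together guarantee that the affine extension of the integer fibration data remains integer throughout the affine lattice.
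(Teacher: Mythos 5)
Your proof is correct and follows essentially the same route as the paper's: the same case split according to whether a facet projects onto a facet of $\Pp(K_4)$ or onto a full-dimensional domain of linearity of $h_l$ or $h_u$, with the latter case handled by identifying $F$ with its (normal, unimodularly triangulated) image as lattice polytopes --- your integer-valued shear $(x,z)\mapsto(x,z-\ell(x))$ is just an explicit form of the paper's bijection between the lattice points of $\operatorname{aff}(F)$ and the lattice generated by $\Pp(K_4)$. The only substantive difference is that for the side facets the paper simply cites that a lattice segmental fibration over a unimodular simplex is a normal Nakajima polytope, whereas you reprove this directly via Lemma \ref{LemmaInteger}; that self-contained argument is sound.
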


\begin{proof}
The facets $F$ of $\Pp$ are of two types. Either $\dim f(F)=\dim F-1$ or $\dim f(F)=\dim F$.

In the first case $f(F)$ is a facet of $\Pp(K_4)$, hence a unimodular simplex. Hence, $F$ is a lattice segmental fibration over a unimodular simplex. Any lattice segmental fibration over a unimodular simplex is a smooth, normal Nakajima polytope \cite[Theorem 4.2]{BeDeGuMi13}, \cite{Na86}, \cite[Section 2.2.1]{HaPaPiSa14}.

In the second case, as in the proof of Theorem \ref{TheoremVeryAmple}, we know that $f(F)$ has a unimodular triangulation, in particular is normal. We claim that the restriction of $f$ to lattice points in the affine space containing $F$ is a bijection onto the lattice generated by $\Pp(K_4)$. Indeed, it is an injection, as it preserves dimension. Moreover, each point of a unimodular simplex in $f(F)$ can be lifted, by the definition of the lattice segmental fibration, to a point of $F$, so the map is surjective. Hence, $F$ and $f(F)$ are isomorphic as lattice polytopes.
\end{proof}

\subsection{Gap vector of $\Qp_{a,b}$}

\begin{theorem}\label{thm:gap2}
Suppose that $b>7a$. Then, the gap vector of $\Qp_{a,b}$ equals to:
$$\gamma(\Qp_{a,b})_{i+2}=\binom{i+2}{2}(\max\{4a-i,0\}+\max\{7a-i,0\})\text{ for }i\geq 0.$$
\end{theorem}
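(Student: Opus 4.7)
I would follow the template of Theorem \ref{thm:gap}. Since $\Pp(K_4)$ is unimodular by Proposition \ref{PropositionUnimodular}, every lattice point $(n, S) \in \Cp$ in the cone over $\Pp(K_4)$ admits a decomposition $S = \sum_{j=1}^{n} V(e_j)$ for some multiset $M$ of $n$ edges of $K_4$. For the segmental fibration $\Qp_{a,b}$, an $x$ with $(n, S, x)$ in the semigroup generated by $\{1\}\times(\Qp_{a,b}\cap\Z^5)$ lies in the union, over all such multisets $M$, of the full integer interval $[h_l(M), h_u(M)]$ where $h_l(M) := \sum_e m_e h_l(e)$ and $h_u(M) := \sum_e m_e h_u(e)$. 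This is a full integer interval because each edge $e$ admits every integer height in $[h_l(e), h_u(e)]$ as a lattice point of $\Qp_{a,b}$, and the Minkowski sum of full integer intervals is again a full integer interval.

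Classifying edges as \emph{short} ($e_{12}, e_{13}, e_{23}$, contributing width $1$) or \emph{long} ($e_{14}, e_{24}, e_{34}$, contributing width $b$), each multiset's interval has width $s(M)+b\,l(M)$, depending only on $(n, S)$ since $l(M) = S_4$. Multisets sharing the same $(n, S)$ are related by the three octahedron relations $V(1,2)+V(3,4) = V(1,3)+V(2,4) = V(1,4)+V(2,3)$, generating a 2-dimensional swap lattice; these shifts move $h_l$ by $\pm(b+7a+2)$, $\pm(b+4a+2)$, or $\pm(2b+11a+4)$. When $l \geq 2$, the interval width $s + bl \geq 2b$ strictly exceeds every single swap shift (using $b > 7a$); consecutive shifted intervals thus overlap, their union fills the entire cone range, and no gaps appear. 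When $l = 0$ only one multiset exists and there is no gap either. Hence the only contributing case is $l = S_4 = 1$.

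For $l = 1$ and $n = i+2$, up to three integer multisets exist, one $M^{(j4)}$ per choice of long edge $e_{j4}$ with $j \in \{1,2,3\}$; a direct calculation yields intervals $[0, i+1+b]$, $[b+4a+2, 2b+4a+i+3]$, and $[2b+11a+4, 3b+11a+i+5]$, separated by exactly $\max\{4a-i, 0\}$ and $\max\{7a-i, 0\}$ missing integers respectively. Analyzing triangle inequalities for the uniquely-determined short multiplicities shows that $M^{(j4)}$ exists iff $S_j \geq 1$ and $S_k \leq i+1$ for both $k \neq j$; combined with $S_1 + S_2 + S_3 = 2i+3$, this forces the dichotomy that either \emph{all three} multisets exist (precisely when $1 \leq S_j \leq i+1$ for each $j \in \{1,2,3\}$) or exactly one does. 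In the former case, a continuity argument on the $2$-dimensional real $\mu$-simplex verifies that the real cone really equals $[0, 3b+11a+i+5]$, so the gap heights all lie in the cone; inclusion-exclusion on $(S_1, S_2, S_3)$ gives $\binom{2i+2}{2} - 3\binom{i+1}{2} = \binom{i+2}{2}$ such $(n,S)$, each contributing $\max\{4a-i, 0\} + \max\{7a-i, 0\}$ gap heights. In the latter case the real cone collapses to the sole multiset's interval, giving no gap.

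The main obstacle I expect is the $l = 1$ case analysis: establishing the precise dichotomy (all three multisets vs.\ exactly one) by ruling out two-multiset configurations via the strict triangle inequalities, verifying that the real cone indeed spans $[0, 3b+11a+i+5]$ in the three-multiset case so that the $4a-i$ and $7a-i$ integer gaps really lie in the cone, and carrying out the inclusion-exclusion count that produces the $\binom{i+2}{2}$ factor.
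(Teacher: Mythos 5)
Your proposal follows essentially the same route as the paper's proof: there, too, the argument reduces to the observation that gaps can only occur over points of the form $\mathbf{1}+S(M)$ with $M$ a multiset of the three short edges $(1,2),(2,3),(1,3)$ (because the long edges have segment length $b>7a$, which swallows the potential gap intervals of lengths $4a$ and $7a$), and the count is then $\binom{i+2}{2}$ such multisets, each contributing $\max\{4a-i,0\}+\max\{7a-i,0\}$ holes between the three lifted intervals. Your parametrization by lattice points $S$ with $S_4=1$ and $1\le S_j\le i+1$, counted by inclusion--exclusion, is in bijection with these multisets, and your dichotomy for the $l=1$ case is correct. One step in your $l\ge 2$ argument is stated incorrectly, though: the width $s+bl\ge 2b$ does \emph{not} exceed the composite swap shift $2b+11a+4$, and it exceeds $b+7a+2$ strictly only when $b>7a+2$. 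This is repairable: the swap $V(1,4)+V(2,3)\leftrightarrow V(3,4)+V(1,2)$ is the composition of the other two elementary swaps, and the intermediate multiset always exists (one removes a matching from $M$ and inserts a different one), so consecutive representations in the order of $h_l$ differ by at most $b+7a+2\le 2b+1$; what is actually needed is that the shifted intervals overlap or abut (shift at most width plus one), not that the width strictly dominates the shift, and this holds exactly under the hypothesis $b>7a$.
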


\begin{proof}
We will describe the set of gaps explicitly. Let $\mathbf{1}:=(1,1,1,1)\in\Z^4$.

\begin{claim}\label{Claim4}
The set of gaps at level $i+2$ (having zero coordinate equal to $2+i$) is the union of the following disjoint sets:
$$(2+i,\mathbf{1}+S(M),[b+2+h_u(M),b+4a+1+h_l(M)]),$$
$$(2+i,\mathbf{1}+S(M),[2b+4a+4+h_u(M),2b+11a+3+h_l(M)]),$$
over all multisubsets $M\subset\{(1,2),(2,3),(1,3)\}$ of cardinality $i$.
\end{claim}

\begin{proof}
The proof goes similarly to the proof of Claim \ref{Claim2}. The only difference is that it is enough to consider multisets $M\subset\{(1,2),(2,3),(1,3)\}$. It is because the lengths of segments corresponding to edges $(4,1),(4,2),(4,3)$ are equal to $b$, so for them $h_u-h_l$ is greater than the lengths of intervals of gaps at level two, that is $4a,7a$. Thus the corresponding set of gaps would be empty.
\end{proof}

For every multiset of edges $M$ of cardinality $i$ the sum of lengths of intervals
$$(2+i,\mathbf{1}+S(M),[b+2+h_u(M),b+4a+1+h_l(M)]),$$
$$(2+i,\mathbf{1}+S(M),[2b+4a+4+h_u(M),2b+11a+3+h_l(M)])$$
equals to $\max\{4a-i,0\}+\max\{7a-i,0\}$.
The number of multisets of cardinality $i$ of a $3$-element set equals to $\binom{i+2}{2}$. Multiplying these numbers we get the assertion.
\end{proof}

\section{Applications}\label{Section4}

Observe first that due to Proposition \ref{PropositionUnimodular} edge polytopes $\Pp(C_{2k})$ are unimodular. Hence, due to Theorem \ref{TheoremVeryAmple} their segmental fibrations $\Pp_{k,a}$ are very ample polytopes. It also follows from Theorem \ref{thm:gap}, since the number of gaps is finite. We are going to answer Question \ref{QuestionMain} $(1)-(6)$:

\begin{enumerate}
\item No, due to Corollary \ref{Corollaryk} all dilatations up to $n$ of the polytope $\Pp_{n!,n!+2}$ are non-normal.

\item False. The polytope $\Pp_{k,k+2}$ (for any $k>2$) is a counterexample (it has even normal facets), since its gap vector has all internal entries equal to zero.

By considering the product $\Pp_{k_1,k_1+2}\times\Pp_{k_2,k_2+2}$ for $k_1\neq k_2$ we obtain a very ample polytope with the gap vector having exactly two nonzero entries at positions $k_1$ and $k_2$. This is a refined counterexample to \cite[Conjecture 3.5(a)]{BeDeGuMi13}. However, the facets of this polytope are non-normal, thus it is not a counterexample to \cite[Conjecture 3.5(b)]{BeDeGuMi13}.

\item False. Take $a,b\geq 1$ such that $b>7a$, and consider polytope $\Qp_{a,b}$. By Lemma \ref{lem:normalfaces} this polytope has normal facets. From Theorem \ref{thm:gap2} follows that $\gamma_{4a+1}(\Qp_{a,b})>\gamma_{4a+2}(\Qp_{a,b})$. It is because:
$$\binom{4a+1}{2}(3a+2)>\binom{4a+2}{2}(3a),$$
$$(4a)(3a+2)>(4a+2)(3a).$$
Similarly $\gamma_{4a+3}(\Qp_{a,b})>\gamma_{4a+2}(\Qp_{a,b})$. It is because:
$$\binom{4a+3}{2}(3a-1)>\binom{4a+2}{2}(3a),$$
$$(4a+3)(3a-1)>(4a+1)(3a).$$
Thus the gap vector of $\Qp_{a,b}$ is not unimodal.

\item No. By adding two non divisors $n_1,n_2$ of $k$ to a proper divisor of $k$, we obtain a polytope $(n_1+n_2)\Pp_{k,k+2}$ which is non-normal, while $n_1\Pp_{k,k+2}$ and $n_2\Pp_{k,k+2}$ are. In particular, $2\Pp_{25,27}$ and $3\Pp_{25,27}$ are normal, while $5\Pp_{25,27}$ is not. Notice that the answer to the question is positive when $\dim\Pp\leq 6$.

\item No, the same example as above -- see reformulation in \cite[Open problem 3 (b) p. 2310]{HaHiMa07}.

\item Yes. Let us recall that in \cite[Example 2.3]{CoHaHi12} the authors constructed a family of polytopes $\Pp_l$ for which $\mu_{midp}(\Pp_l)=\mu_{Hilb}(\Pp_l)=2$ and $\mu_{idp}(\Pp_l)=2l$. Consider the product $\Qp:=\Pp_{3,5}\times \Pp_l$. By Lemma \ref{LemmaProduct} we have:
$$\mu_{midp}(\Qq)=2<3\leq \mu_{Hilb}(\Qq)\leq 6<2l=\mu_{idp}(\Qq),\text{ for }l>3.$$
\end{enumerate}

In \cite[Question 3.5 (1)]{CoHaHi12} the authors ask for the relations among $\mu_{midp}(\Pp),\mu_{idp}(\Pp)$ and $\mu_{Hilb}(\Pp)$. As our examples show this relations can be quite complicated.

\begin{corollary}\label{Corollary2}
$ $
\begin{enumerate}
\item There exists an integral polytope $\Pp$ with $\mu_{idp}(\Pp)=2$ and $\mu_{Hilb}(\Pp)=n$ if and only if $n$ is a prime number.
\item There exists an integral polytope $\Pp$ with $\mu_{idp}(\Pp)=3$ and $\mu_{Hilb}(\Pp)=n$ if and only if $n=4$ or $n$ is a prime number different from $2$.
\item Consider a prime number $n$ greater or equal to a positive integer $k$. There exists an integral polytope $\Pp$ with $\mu_{idp}(\Pp)=k$ and $\mu_{Hilb}(\Pp)=n$.
\end{enumerate}
\end{corollary}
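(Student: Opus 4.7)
The plan is to treat each part as an equivalence, splitting it into a necessity argument (constraining the admissible values of $n$ given $\mu_{idp}$) and a sufficiency argument (exhibiting explicit polytopes). Both directions rely on Corollary~\ref{Corollaryk} for single-factor models and Lemma~\ref{LemmaProduct} for products.

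For necessity I would establish the following central principle: if $\mu_{idp}(\Pp)=k$ and $v$ is a Hilbert basis element of $\Cp$ of degree $d$, then $d$ admits no factorisation $d=ab$ with $a\geq k$ and $b\geq 2$. Indeed, $a\Pp$ is normal, so the cone equality $d\Pp=b(a\Pp)$ forces $v=w_1+\cdots+w_b$ with each $w_i\in a\Pp$ lattice, and since $b\geq 2$ this contradicts indecomposability of $v$ in the cone semigroup. Applied with $k=2$ this forces $\mu_{Hilb}=n$ to be prime, proving the necessity direction of part~(1); applied with $k=3$ it restricts $n$ to $\{1,2,3,4,5\}\cup\{\text{primes}\geq 7\}$. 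The value $n=1$ is ruled out since it forces $\Pp$ normal (hence $\mu_{idp}=1$), and for part~(2) the value $n=2$ is eliminated by a separate greedy partition argument: if $\mu_{Hilb}(\Pp)=2$, then any degree-$2m$ element of $\Cp$ has a Hilbert expansion with $a$ degree-$1$ and $b$ degree-$2$ summands satisfying $a+2b=2m$, and pairing the degree-$1$ summands always yields a splitting into $m$ degree-$2$ cone pieces, so $2\Pp$ is normal and $\mu_{idp}\leq 2$.

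For sufficiency I would use $\Pp_{n,n+2}$ in part~(1), which yields $\mu_{Hilb}=n$ and $\mu_{idp}=2$ exactly when $n$ is prime by Corollary~\ref{Corollaryk}. In part~(2) the case $n=4$ is realised by $\Pp_{4,6}$, and for $n=3$ I propose $\Pp_{3,6}$: its Hilbert basis contains two degree-$3$ elements $h_a=(3,\mathbf{1},4)$ and $h_b=(3,\mathbf{1},5)$, and a direct computation shows that each of $h_a+h_b$, $2h_a$ and $2h_b$ decomposes as a sum of six lattice vertices of $\Pp_{3,6}$. This \emph{trade} relation allows every degree-$sm$ element of $\Cp$ (for $s\geq 3$) to be rewritten with at most $m$ occurrences of $h_a,h_b$ in its Hilbert expansion, after which one obtains a splitting into $m$ degree-$s$ cone pieces and hence $\mu_{idp}(\Pp_{3,6})=3$. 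For each odd prime $n\geq 5$ I would take the product $\Pp_{3,6}\times\Pp_{n,n+2}$; Lemma~\ref{LemmaProduct} immediately yields $\mu_{idp}=\max(3,2)=3$ and $\mu_{Hilb}\geq n$. For part~(3) with $n=k$ prime the polytope $\Pp_{n,2n}$ plays the analogous role, its trade relation $h_i+h_j=$ sum of $2n$ vertices giving $\mu_{idp}=\mu_{Hilb}=n$; and for $n>k$ prime the product $\Pp_{k,2k}\times\Pp_{n,n+2}$ delivers $\mu_{idp}=\max(k,2)=k$ and $\mu_{Hilb}\geq n$.

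The main obstacle will be the upper bound $\mu_{Hilb}\leq n$ for the products appearing in parts~(2) and~(3). Lemma~\ref{LemmaProduct} only provides $\mu_{Hilb}(\Pp\times\Qp)\leq f(\mu_{Hilb}(\Pp),\mu_{Hilb}(\Qp))$ via the maximum degree of the Graver basis of an associated toric ideal, and this bound is not always tight: primitive binomials of \emph{mixed} form could, a priori, lift to Hilbert basis elements of degree exceeding $n$ in the product cone. Dismissing such candidates requires showing case by case, using the classification of primitive binomials in the proof of Lemma~\ref{LemmaProduct}, that every mixed element $(u,w)\in\Cp_{\Pp\times\Qp}$ admits an explicit decomposition, exploiting the trade relations available in the $\Pp_{k,2k}$ (or $\Pp_{3,6}$) factor to peel off a component of the appropriate degree. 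This is the delicate technical step at the heart of the proof.
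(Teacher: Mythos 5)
Your necessity arguments coincide with the paper's: the principle that $\mu_{Hilb}(\Pp)$ admits no factorisation $d=ab$ with $a\ge\mu_{idp}(\Pp)$ and $b\ge 2$ is exactly the paper's opening observation, and your elimination of $n=2$ in part (2) by pairing degree-one Hilbert basis elements is the paper's ``$\mu_{midp}=2$ contradicts $\mu_{idp}=3$'' (only phrase the conclusion as a contradiction with the standing hypothesis $\mu_{idp}=3$; the unconditional implication ``$2\Pp$ normal $\Rightarrow\mu_{idp}\le 2$'' is false in general). Where you genuinely diverge is in the base constructions for sufficiency: the paper imports the $n=3$, $n=4$ examples of part (2) and the $\mu_{idp}=\mu_{Hilb}=k$ example of part (3) from \cite[Theorems 2.1 and 2.6]{CoHaHi12}, whereas you build them inside the paper's own family as $\Pp_{3,6}$, $\Pp_{4,6}$ and $\Pp_{k,2k}$. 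These do work, and more cleanly than via your ``trade relations'': by Theorem~\ref{thm:gap} the holes of $\Pp_{k,a}$ sit exactly at levels $k,\dots,a-2$, so every level-$s$ point with $s<k$ is a sum of $s$ vertices; a hole at a level $sm$ with $s<k$, $m\ge 2$ (which exists for every $s<k$ when $a=2k$) therefore witnesses non-normality of $s\Pp_{k,a}$, while for $s\ge k$ and $m\ge 2$ the level $sm\ge 2k$ clears all holes when $a\le 2k$ and normality follows. Note that your writeup omits the ``$s\Pp$ non-normal for $s<k$'' half, which is needed to pin $\mu_{idp}$ down exactly rather than merely bound it from above.

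The genuine gap is the one you flag yourself: the upper bound $\mu_{Hilb}(\Pp\times\Qp)\le n$ for the products in parts (2) and (3). Lemma~\ref{LemmaProduct}(1) does not deliver it, and the lower bound alone does not finish the proof; the paper's own one-line treatment of part (3) is equally silent here, so you have correctly located the weakest joint of the argument, but a proposal that leaves ``the delicate technical step at the heart of the proof'' undone is incomplete. The step can in fact be closed without the case analysis of mixed Graver elements you suggest. Let $H(\Qp)$ denote the largest degree of a hole of $\Qp$, and suppose $(d,v,w)$ lies in the cone over $\{1\}\times\Pp\times\Qp$ with $d>\mu_{Hilb}(\Pp)$ and $d>H(\Qp)$. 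Then $v$ splits in the cone over $\{1\}\times\Pp$ as $v'+v''$ with $\deg v'=e\in(0,d)$, while $w$ is a sum of $d$ lattice points of $\{1\}\times\Qp$, whose partial sums realise every degree in $\{0,\dots,d\}$, in particular $e$; pairing the two splittings decomposes $(d,v,w)$. Hence $\mu_{Hilb}(\Pp\times\Qp)\le\max\{\mu_{Hilb}(\Pp),H(\Qp),1\}$. Taking $\Qp=\Pp_{n,n+2}$, whose unique hole has degree $n$, and a first factor with $\mu_{Hilb}$ equal to $3$ (resp.\ $k\le n$), this bound is exactly $n$, which together with the lower bound of Lemma~\ref{LemmaProduct}(1) completes parts (2) and (3).
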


\begin{proof}
Notice that $\mu_{Hilb}(\Pp)$ cannot have a proper divisor greater than $\mu_{idp}(\Pp)$. Thus, if $\mu_{Hilb}(\Pp)>(\mu_{idp}(\Pp)-1)^2$, then $\mu_{Hilb}(\Pp)$ must be a prime number (as showed below the inequality must be strict). Surprisingly, even if $\mu_{idp}(\Pp)=2$, still $\mu_{Hilb}(\Pp)$ may be an arbitrary prime integer $p$. Indeed, it is sufficient to consider $\Pp=\Pp_{p,p+2}$.

For the second statement we first prove the implication $\Rightarrow$. Indeed, $n\neq 2$, as if $n=2$, then $\mu_{midp}(\Pp)=2$ which contradicts $\mu_{idp}(\Pp)=3$. If $n>5$ then $n$ must be prime by the arguments presented above. To prove $\Leftarrow$ we need to present examples for each $n$. For $n=3$ the construction is given in \cite[Theorem 2.1]{CoHaHi12}. For $n=4$ the construction is given in \cite[Theorem 2.6]{CoHaHi12}.

It remains to prove the last statement. One can consider the product of two polytopes -- the example for \cite[Theorem 2.1]{CoHaHi12} for $j=k$ and $\Pp_{n,n+2}$.
\end{proof}


\end{document}